\newtheorem{theorem}{Theorem}[section]
\newtheorem{remark}[theorem]{Remark}
\newtheorem{lemma}[theorem]{Lemma}
\newtheorem{proposition}[theorem]{Proposition}
\newtheorem{ex}[theorem]{Example}
\newtheorem{definition}[theorem]{Definition}
\newcommand{\norm}  [1]{\ensuremath{\left  \|       #1  \right \|       }}
\newcommand{\cb}    [1]{\ensuremath{\left  \{      #1  \right \}       }}
\newcommand{\st} {\ensuremath{|\;}}
\newcommand{\epi} {{\rm epi \,}}
\newcommand{\cl}  {{\rm cl  \,}}
\newcommand{\bd}  {{\rm bd \,}}
\newcommand{\Int} {{\rm int \,}}
\newcommand{\conv}  {{\rm conv \,}}
\newcommand{\cone}{{\rm cone\,}}
\newcommand{\R}{\mathbb{R}}
\newcommand{\N}{\mathbb{N}}
\newcommand{\smz}{\setminus\{0\}}
\newcommand{\Minc}{{\rm Min}_C\,}
\newcommand{\wMinc}{{\rm wMin}_C\,}
\newcommand{\wMinK}{{\rm wMin}_K\,}
\newcommand{\MaxK}{{\rm Max}_K\,}
\newcommand{\recc}{{\rm recc \,}}
\author{Firdevs Ulus  
}
\title{Tractability of Convex Vector Optimization Problems in the Sense of Polyhedral Approximations}
\date{\today}
\begin{document}
\maketitle

\begin{abstract} \noindent
There are different solution concepts for convex vector optimization problems (CVOPs) and a recent one, which is motivated from a set optimization point of view, consists of finitely many efficient solutions that generate polyhedral inner and outer approximations to the Pareto frontier. A CVOP with compact feasible region is known to be bounded and there exists a solution of this sense to it. However, it is not known if it is possible to generate polyhedral inner and outer approximations to the Pareto frontier of a CVOP if the feasible region is not compact. This study shows that not all CVOPs are tractable in that sense and gives a characterization of tractable problems in terms of the well known weighted sum scalarization problems. 
\medskip

\noindent
{\bf Keywords:} Vector optimization, multiobjective optimization, convex programming, polyhedral approximation.

\medskip

\noindent
{\bf MSC 2010 Classification:} 90C29, 90C25

\end{abstract}

\section{Introduction} \label{sect:intro}
Vector optimization problem with a finite dimensional image space is to minimize an $\R^q$-valued objective function with respect to the partial order induced by an ordering cone $K \subseteq \R^q$ over a feasible region. Whenever the ordering cone is the positive orthant, the problem is called a multi-objective optimization problem, namely $q$ objective functions are to be minimized with respect to the component-wise ordering. 

There are different solution concepts regarding vector optimization problems. A \emph{minimizer} (efficient solution or Pareto optimal solution) for instance, is a singleton in the feasible set which is not dominated by the other feasible points. Similarly, a \emph{weak minimizer} (weakly efficient solution or weak Pareto optimal solution) is a feasible solution which is not strictly dominated. Note that a solution $x$ is said to (strictly) dominate another solution $\bar{x}$ if the image of $x$ is (strictly) less than the image of $\bar{x}$ with respect to ordering cone $K$.

More recently, a solution concept, which is motivated from a set optimization point of view, has been introduced for vector optimization problems by Heyde and L\"ohne in~\cite{freshlook}. Accordingly, a solution consists of minimizers which, together with the extreme directions of the ordering cone, generate the Pareto frontier. Clearly, for linear vector optimization problems (LVOPs) a solution in this sense contains finitely many minimizers and there are algorithms to generate one, see, for instance,~\cite{benson,ehr_dual,ehrgott_simplex,steuer73,lvop,geometric_lp,lvopalg}. 

In~\cite{cvop}, an \emph{$\epsilon$-solution} concept is given as a finite set of weak minimizers which generates an inner and an outer approximation to the Pareto frontier. There are Benson-type algorithms that find \emph{$\epsilon$-solutions} to convex vector optimization problems (CVOPs) assuming that the feasible region is compact, see~\cite{ehrgott, cvop}. Note that if the feasible region is compact, then the problem is \emph{bounded}, that is, the image of the feasible region in the objective space is included in a shifted cone, namely ${a}+K$ for some $a \in \R^q$ and $K$ being the ordering cone of the problem. In some applications, the feasible region of a convex vector optimization problem is not compact. Computation of some set-valued risk measures~\cite{cvop} can be given as an example of such cases. In general, the problem of interest may be unbounded or it may be difficult to check if it is a bounded problem. 

The aim of this study is to understand the structure of possibly unbounded CVOPs and to see if these problems are \emph{tractable} in the sense that there exist polyhedral outer and inner approximations to the Pareto frontier. We provide a simple example for which the problem is not bounded and it is not possible to find polyhedral outer and inner approximations such that the Hausdorff distance between the two is finite. On the other hand, the existence of unbounded, but \emph{tractable} problems is known. For instance, it is possible to generate a solution to a linear vector optimization problem as long as the Pareto frontier exists. 

Here, we provide a characterization of \emph{tractable} CVOPs depending on the recession cone of the \emph{upper image} (image of the feasible region added to the original ordering cone) of the problem. Accordingly, there exists a polyhedral inner and outer approximations to the Pareto frontier of a CVOP if and only if the problem is bounded with respect to the ordering cone taken as the recession cone of the {upper image} of the problem. We call such problems \emph{self-bounded}.

We give a characterization of the recession cone of the upper image of a {self-bounded} problem in terms of the well-known weighted sum scalarization problems. Accordingly, for a self-bounded problem, the set of weights which makes the weighted sum scalarization problem bounded is equal to the (positive) dual cone of the recession cone. Moreover, we also show the reverse implication, that is, if these two sets are equal, then the problem is self-bounded.

This paper is structured as follows. Section 2 is dedicated to basic concepts and notation. In Section 3, some results on convex upper closed sets are provided. The convex vector optimization problem, its solution concepts and the main results of the paper are provided in Section 4. Section 5 provides some concluding remarks.

\section{Preliminaries}
\label{sect:Prelim}

A subset $K$ of $\R^q$ is a \emph{cone} if $\lambda k \in K$ when $k\in K$ and $\lambda >0$. For a set $A \subseteq \R^q$, the interior, closure, boundary, convex hull and the conic hull of $A$ are denoted respectively by $\Int A$, $\cl A$, $\bd A$, $\conv A$, and $\cone A$. Moreover, $k \in \R^q\setminus\{0\}$ is called a direction of $A$ if $\{a+\alpha k \in \R^q \st a\in A, \alpha > 0 \} \subseteq A$. The \emph{recession cone} $\recc A$ of $A$ consists of the directions of $A$, that is, 
\begin{align}\label{eq:recc}
\recc A = \{k\in \R^q \st \forall a\in A, \forall \alpha \geq 0: \: a +\alpha k \in A \}.
\end{align}

A polyhedral convex set $A \subseteq \R^q$ can be written as
\begin{align}\label{Vrep}
A = \conv\{x^1,\ldots, x^s\}+\conv \cone \{k^1,\ldots,k^t\},
\end{align}
where $s \in \N \setminus \{0\}, t \in \N $, each $x^i \in \R^q$ is a point, and each $k^j \in \R^q \setminus \{0\}$ is a {direction} of $A$. The set of points $\{ x^1,\ldots, x^s \}$ together with the set of directions $\{k^1, \ldots , k^t \}$ are said \emph{to generate} the polyhedral convex set $A$. 
Throughout, we also consider (not necessarily polyhedral) convex subsets of $\R^q$ in the form $A = \conv\{x^1,\ldots,x^s\} + K$ where $K \subseteq \R^q$ is a convex cone. In this case, the set of points $\{ x^1,\ldots, x^s \}$ together with cone $K$ are said \emph{to generate} $A$.

The \emph{distance} from a point $y \in \R^q$ to a set $A \subseteq \R^q$ is given by $d(y,A):=\inf_{a \in A}\norm{y-a}$. The \emph{Hausdorff distance} between two closed sets $A_1$ and $A_2$ is given by 
\begin{equation}\label{eq:Hausdorff_dist}
h(A_1,A_2) = \max\{\sup_{a_1\in A_1}d(a_1, A_2),\sup_{a_2\in A_2}d(a_2, A_1)\}.
\end{equation}

A convex cone $C$ is said to be \emph{solid}, if it has a non-empty interior; \emph{pointed} if it does not contain any line through 0; and \emph{non-trivial} if $\emptyset \neq C \neq \R^q$. A non-trivial convex pointed cone C defines a partial ordering $\leq_C$ on $\R^q$: $v \leq_C w$ if and only if $w - v \in C$. Let $C\subseteq \R^q$ be a non-trivial convex pointed cone and $X \subseteq \R^n$ a convex set. A function $f:X \rightarrow \R^q$ is said to be \emph{$C$-convex} if $f(\alpha x+(1-\alpha)y) \leq_C \alpha f(x)+(1-\alpha)f(y)$ holds for all $x,y \in X$, $\alpha \in [0,1]$, see e.g., \cite[Definition 6.1]{luc}. 

For a pointed cone $C$, a point $y \in A$ is called \emph{$C$-minimal element} of A if $\left( \{y\} -  C \setminus \{0\}\right) \cap A = \emptyset$. If cone $C$ is also solid, then a point $y \in A$ is called \emph{weakly $C$-minimal element} if $ \left( \{y\} - \Int C \right) \cap A = \emptyset$. The set of all $C$-minimal elements of $A$ and weakly $C$-minimal elements of $A$ are denoted by $\Minc(A)$ and $\wMinc(A)$, respectively. The (positive) dual cone of $C$ is the set $C^+:=\cb{z \in \R^q\st \forall y \in C: z^T y \geq 0}$.

Let cone $C\subseteq R^q $ be nontrivial and convex. A set $A \subseteq \R^q$ is said to be \emph{upper closed with respect to $C$} if $A = \cl (A+C)$, and \emph{convex upper closed with respect to $C$} if $A = \cl \conv (A+C)$. The collection of such sets are denoted by $\mathcal{G}(\R^q,C)$, that is, 
\begin{equation}\label{eq:G(Rq,C)}
\mathcal{G}(\R^q,C):= \{A \subseteq \R^q \st A = \cl \conv (A +C)\}.
\end{equation} 

\begin{remark}\label{rem:completelattice}
It is known that $(\mathcal{G}(\R^q, C), \oplus, \odot)$ is a partially ordered conlinear space with neutral element $\cl C$, where $B_1 \oplus B_2 := \cl (B_1+B_2)$ and $\alpha\odot B:= \cl (\alpha \cdot B+C)$. Here, $+$ and $\cdot$ are the usual Minkowski summation and multiplication with the conventions $\emptyset+A = A +\emptyset = \emptyset$ for all $A \subseteq \R^q$. Moreover, $\mathcal{G}(\R^q,C)$ is a complete lattice under $\supseteq$ with $$\inf \mathcal{A}= \cl \conv \bigcup_{A \in \mathcal{A}} A,\:\:\: \sup \mathcal{A}= \bigcap_{A \in \mathcal{A}} A,$$ for a nonempty collection $\mathcal{A} \subseteq \mathcal{G}(\R^q,C)$, see, for instance,~\cite{setopt_intro}.
\end{remark}
$A \in \mathcal{G}(\R^q,C)$ is said to be \emph{bounded} if there exists some $y \in \R^q$ with $y + C \supseteq A$. Similarly, for any cone $K\subseteq \R^q$, we say that $A$ is \emph{bounded with respect to $K$} if there exists some $y \in \R^q$ with $y + K \supseteq A$.

Throughout, $B(a,r)$ denotes the closed ball around $a \in\R^q$ with radius $r>0$, that is $B(a,r) = \{y\in \R^q \st \norm{y-a}\leq r\}$, where $\norm{\cdot}$ is the Euclidean norm. The positive orthant in $\R^q$ is $\R^q_+:=\{y\in\R^q \st y_i \geq 0, i=1,\ldots,q\}$.

\section{On Convex Upper Closed Sets} \label{sect:uppersets}

For solving convex vector optimization problems, convex upper closed sets with respect to the ordering cone of the problem play an important role as the image of the set of all weak minimizers can be seen as (a subset of) the boundary of a convex upper closed set known as the \emph{upper image}. Indeed, there are solution concepts for convex vector optimization problems that involve generating (approximations to) the upper image, see, for instance,~\cite{lohne,cvop}. For linear vector optimization problems, it is possible to generate this set by a finite set of points and a finite set of directions~\cite{lohne}, whereas for nonlinear convex vector optimization problems, a solution usually generates an inner and an outer approximation to the upper image~\cite{cvop}. 

If an upper closed set $A$ is known to be bounded with respect to $K$, then it is possible to find an outer approximation to $A$ which is generated by a finite set $\bar{Y} \subset \R^q$ and cone $K$ in the sense that $\conv{\bar{Y}}+K \supseteq A$. If one also wants to generate an inner approximation using the same cone $K$, then $A +K \subseteq A$ needs to be satisfied. The following proposition shows that such cone $K$ needs to be equal to the recession cone $\recc A$ of $A$.

\begin{proposition}\label{prop:upperset1}
Let $A \in \mathcal{G}(\R^q,C)$ be bounded with respect to $K$ for some closed convex cone $K$ which also satisfies $A+K\subseteq A$. Then, $K = \recc A$. 
\end{proposition}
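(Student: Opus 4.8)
The statement asks to show that if $A \in \mathcal{G}(\R^q,C)$ is bounded with respect to a closed convex cone $K$ and satisfies $A+K \subseteq A$, then $K = \recc A$. The plan is to prove the two inclusions $K \subseteq \recc A$ and $\recc A \subseteq K$ separately.

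The inclusion $K \subseteq \recc A$ should be the easy direction. Given $k \in K$ and any $a \in A$, I want $a + \alpha k \in A$ for all $\alpha \geq 0$. Since $K$ is a cone, $\alpha k \in K$ for $\alpha > 0$, so $a + \alpha k \in A + K \subseteq A$ by hypothesis; the case $\alpha = 0$ is trivial. Thus every $k \in K$ is a recession direction (and $0 \in \recc A$ since $A$ is nonempty and convex — note $A$ is nonempty because it is bounded w.r.t. $K$, which presumes a shifting point exists, though one should double-check the degenerate empty case is excluded or handled). Hence $K \subseteq \recc A$.

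The reverse inclusion $\recc A \subseteq K$ is where the boundedness hypothesis does the work, and I expect this to be the main obstacle. Suppose $k \in \recc A \setminus \{0\}$; I must show $k \in K$. Fix a point $a_0 \in A$. By the recession property, $a_0 + \alpha k \in A$ for all $\alpha \geq 0$. By boundedness with respect to $K$, there is some $y \in \R^q$ with $A \subseteq y + K$, so $a_0 + \alpha k \in y + K$, i.e. $a_0 - y + \alpha k \in K$ for all $\alpha \geq 0$. Dividing by $\alpha$ (legitimate since $K$ is a cone) gives $\tfrac{1}{\alpha}(a_0 - y) + k \in K$ for every $\alpha > 0$. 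Letting $\alpha \to \infty$ and using that $K$ is \emph{closed}, we conclude $k \in K$. This shows $\recc A \subseteq K$, and combined with the first inclusion yields $K = \recc A$.

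The only subtleties to check carefully are: that $A$ is genuinely nonempty (otherwise $\recc A$ and the statement need interpretation — presumably the convention or context rules this out, since "bounded with respect to $K$" is stated via existence of a containing shifted cone and the sets of interest are upper images which are nonempty); that the limit argument indeed only uses closedness of $K$ and not convexity; and that $0$ is consistently treated (it lies in both $K$, since $K$ is a closed convex cone, and in $\recc A$). No use of the $C$-structure of $A$ beyond $A \in \mathcal{G}(\R^q,C)$ being a nonempty closed convex set is actually needed; the argument is purely about recession directions and closed cones.
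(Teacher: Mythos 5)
Your proof is correct and follows essentially the same route as the paper: the inclusion $K\subseteq\recc A$ from $A+K\subseteq A$, and the inclusion $\recc A\subseteq K$ from $A\subseteq y+K$ (the paper states this latter step in one line, while you spell out the division-by-$\alpha$ and closedness argument that justifies it). No gaps.
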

\begin{proof}
As $A$ is bounded with respect to $K$, there exists $y \in \R^q$ such that $y + K \supseteq A$. Then, $K \supseteq \recc A$. On the other hand, $A+K\subseteq A$ implies that $K \subseteq \recc A$.
\end{proof}
\begin{definition} \label{defn:bdd}
A nonempty set $A \in \mathcal{G}(\R^q,C)$ is said to be \emph{self-bounded} if $A\neq \R^q$ and it is bounded with respect to its own recession cone $\recc A$.
\end{definition}
\begin{remark}\label{rem:notsolid}
{Note that the definition of self-boundedness can be extended to sets with recession cones that are not solid. An example of a not self-bounded set would be $A = \epi f(x) \subseteq \R^2$ for $f(x) = x^2$, where $\epi f$ is the epigraph of $f$. Clearly, $\recc A = \{[0, k]^T\st k \geq 0\}$ and there exists no $y \in \R^2$ such that $A \subseteq y + \recc A$.} 		
\end{remark}

Let $\mathcal{B}(\R^q,C)$ be the set of all self-bounded sets together with the whole space, that is, $$\mathcal{B}(\R^q,C):= \{B \in \mathcal{G}(\R^q,C) \st B\text{~is self-bounded or~} B =\R^q\}.$$
Next, we show that $\mathcal{B}(\R^q,C)$ is a conlinear space, however it is not necessarily a complete lattice.
\begin{proposition} \label{prop:conlinearspace}
	$(\mathcal{B}(\R^q,C),\oplus,\odot)$ is a conlinear space with the neutral element $\cl C$.
\end{proposition}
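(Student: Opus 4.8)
The plan is to verify the conlinear-space axioms for $(\mathcal{B}(\R^q,C),\oplus,\odot)$, using the fact (Remark~\ref{rem:completelattice}) that $(\mathcal{G}(\R^q,C),\oplus,\odot)$ is already a conlinear space with neutral element $\cl C$. Since $\mathcal{B}(\R^q,C)\subseteq\mathcal{G}(\R^q,C)$, the associativity and commutativity of $\oplus$, the compatibility laws for $\odot$, and the neutral-element property of $\cl C$ are inherited automatically; moreover $\cl C\in\mathcal{B}(\R^q,C)$ because $\recc(\cl C)=\cl C$ and $\cl C\subseteq 0+\cl C$, so $\cl C$ is self-bounded (or equals $\R^q$ in the degenerate case). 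Hence the only thing to prove is that $\mathcal{B}(\R^q,C)$ is closed under the two operations $\oplus$ and $\odot$.

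For $\odot$: if $B=\R^q$ the claim is trivial, and if $\alpha>0$ and $B$ is self-bounded with $B\subseteq y+\recc B$, then $\alpha\odot B=\cl(\alpha B+C)$; one checks $\recc(\alpha\odot B)=\recc B$ (scaling by a positive constant does not change the recession cone, and adding $C\subseteq\recc B$ and taking closure does not either), and $\alpha B+C\subseteq \alpha y+\alpha\recc B+C=\alpha y+\recc B$, so after closure $\alpha\odot B\subseteq \alpha y+\recc(\alpha\odot B)$. Thus $\alpha\odot B$ is self-bounded. (The case $\alpha=0$ gives $\cl C$, already handled.)

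For $\oplus$: let $B_1,B_2\in\mathcal{B}(\R^q,C)$. If either equals $\R^q$ then $B_1\oplus B_2=\R^q\in\mathcal{B}(\R^q,C)$. Otherwise both are self-bounded, say $B_i\subseteq y_i+\recc B_i$. Then $B_1+B_2\subseteq (y_1+y_2)+\recc B_1+\recc B_2$, and $\recc B_1+\recc B_2\subseteq\recc(\cl(B_1+B_2))=\recc(B_1\oplus B_2)$, so taking closures gives $B_1\oplus B_2\subseteq (y_1+y_2)+\recc(B_1\oplus B_2)$. It remains to rule out $B_1\oplus B_2=\R^q$: if it were all of $\R^q$ its recession cone would be $\R^q$, but from the inclusion just derived, $B_1\oplus B_2$ would then have to contain $(y_1+y_2)+\R^q=\R^q$ — which is consistent — so this argument alone does not exclude it. Instead I would argue directly: $B_1\oplus B_2\supseteq B_1+B_2$, and since each $B_i$ lies in a translate of a pointed-enough cone... here one must be careful, because $\recc B_i$ need not be pointed (cf.\ Remark~\ref{rem:notsolid}), so $B_1\oplus B_2$ genuinely can be a halfspace or $\R^q$. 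The clean statement is: $B_1\oplus B_2=\R^q$ forces $\recc B_1+\recc B_2$ not to be contained in any proper translate-stable set, but actually the inclusion $B_1\oplus B_2\subseteq(y_1+y_2)+\recc(B_1\oplus B_2)$ shows $B_1\oplus B_2$ is bounded with respect to its own recession cone regardless, so $B_1\oplus B_2\in\mathcal{B}(\R^q,C)$ by definition of that set (which explicitly includes $\R^q$). So in fact no exclusion is needed.

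The main obstacle is bookkeeping around the closure operator and the recession cone: one must justify that $\recc(\cl(A+C))\supseteq\recc A$ and that $\cl$ does not enlarge the relevant translate, i.e.\ that if $A\subseteq y+\recc A$ with $\recc A$ closed then $\cl(A+C)\subseteq y+\recc(\cl(A+C))$. These follow from standard facts about recession cones of closed convex sets (for a closed convex set $D$, $\recc D=\{k: D+k\subseteq D\}$, and if $D\subseteq y+\text{(closed cone }L)$ then $\recc D\subseteq L$), but they should be stated explicitly. A secondary point worth a sentence is that $\mathcal{B}(\R^q,C)$ need \emph{not} be a complete lattice — the infimum $\cl\conv\bigcup_{A\in\mathcal A}A$ of self-bounded sets can fail to be self-bounded, e.g.\ a family of translated pointed cones whose union's convex hull is a halfspace — which is why the proposition only claims the conlinear structure.
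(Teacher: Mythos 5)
Your proposal is correct and follows essentially the same route as the paper: both reduce the claim, via Remark~\ref{rem:completelattice}, to showing that $\mathcal{B}(\R^q,C)$ is closed under $\oplus$ and $\odot$, and both establish self-boundedness of $B_1\oplus B_2$ and $\alpha\odot B$ through the inclusions $\recc B_1+\recc B_2\subseteq\recc(B_1\oplus B_2)$ and $\alpha\odot\recc B\subseteq\recc(\alpha\odot B)$ applied to the translates $y_1+y_2$ and $\alpha y$. The only difference is presentational: the paper verifies the recession-cone inclusions by an explicit sequence argument, where you invoke them as standard facts (correctly flagging that they need to be stated), and your digression on whether $B_1\oplus B_2=\R^q$ must be excluded resolves itself exactly as the definition of $\mathcal{B}(\R^q,C)$ intends.
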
 
\begin{proof}
By Remark~\ref{rem:completelattice}, it is enough to show that $B_1 \oplus B_2$ and $\alpha \odot B$ are in $\mathcal{B}(\R^q,C)$ for $B_1, B_2 \in \mathcal{B}(\R^q,C)$, $\alpha \geq 0$,  
 
Let $b \in B_1 \oplus B_2$ for $B_i \in \mathcal{B}(\R^q,C)$, let $y_i \in \R^q$ be such that $B_i \subseteq y_i + \recc B_i$, for $i = 1,2$. Clearly, $b \in y_1+y_2+\cl(\recc B_1 + \recc B_2)$. Note that $B_1 \oplus B_2$ is self-bounded as $\recc B_1 \oplus \recc B_2 \subseteq \recc(B_1\oplus B_2)$ holds. To see the last inclusion, let $r \in \cl(\recc B_1 + \recc B_2)$, that is, $r = \lim_{n \rightarrow \infty} (r_1^{(n)}+r_2^{(n)})$ for some $(r_i^{(n)})_n \in \recc B_i$; let $b\in \cl(B_1+B_2)$, that is, $b = \lim_{n \rightarrow \infty} (b_1^{(n)}+b_2^{(n)})$ for some $(b_i^{(n)})_n \in B_i$. For any $\gamma \geq 0$, we have $b + \gamma r = \lim_{n \rightarrow \infty}(b^{(n)}_1+\gamma r^{(n)}_1 +b^{(n)}_2+\gamma r^{(n)}_2) \in \cl (B_1+B_2)$. 

Let $b \in \alpha \odot B$ for $\alpha \geq 0$, $B \in \mathcal{B}(\R^q,C)$. Let $y\in \R^q$ be such that $B \subseteq y+\recc B$. Note that $b = \lim_{n \rightarrow \infty} (\alpha y+r_n+c_n)$ for some $r_n \in \recc B, c_n \in C$. Then, $b \in \alpha y + \alpha \odot \recc B$. Note that $\alpha \odot B$ is self-bounded as $\alpha \odot \recc B \subseteq \recc(\alpha \odot B)$ holds. To see the last inclusion, let $r \in \cl(\alpha \cdot \recc B + C)$, that is, $r = \lim_{n \rightarrow \infty} (\alpha r^{(n)}+c^{(n)})$ for some $(r^{(n)})_n \in \recc B$, $(c^{(n)})_n \in C$; let $b\in \cl(\alpha \cdot B+C)$, that is, $b = \lim_{n \rightarrow \infty} (\alpha b^{(n)}+\tilde{c}^{(n)})$ for some $(b^{(n)})_n \in B$, $(\tilde{c}^{(n)})_n \in C$. For any $\gamma \geq 0$, we have $b + \alpha r = \lim_{n \rightarrow \infty}(\alpha (b^{(n)}+\gamma r^{(n)}) + \tilde{c}^{(n)}+\gamma c^{(n)}) \in \cl (\alpha \cdot B + C)$. 	
	
\end{proof}

\begin{remark} \label{rem:selfbddintersection}
Note that $\mathcal{B}(\R^q,C)$ is closed under intersections. Consider a collection $(B_{\alpha})_{\alpha \in A} \in \mathcal{B}(\R^q,C)$ and let $b_{\alpha} \in \R^q$ be such that $B_{\alpha} \subseteq b_{\alpha} +\recc B_{\alpha}$, for $\alpha \in A$. The assertion holds trivially if the intersection is empty. Assume $\bigcap_{\alpha \in A} B_{\alpha} \neq \emptyset$. Then, $\recc (\bigcap_{\alpha \in A} B_{\alpha})\supseteq \bigcap_{\alpha \in A} \recc B_{\alpha} \supseteq C$. Let $b\in \bigcap_{\beta\in A}(b_{\beta}-\recc (\bigcap_{\alpha \in A}B_{\alpha}))$. Note that the existence of such $b$ is guaranteed as $\recc (\bigcap_{\alpha \in A}B_{\alpha})$ is solid and $\bigcap_{\alpha \in A} B_{\alpha} \neq \emptyset$. Then, it can be shown that $\bigcap_{\alpha \in A}B_{\alpha} \subseteq b + \recc(\bigcap_{\alpha \in A}B_{\alpha})$. 

Note that $(\mathcal{B}(\R^q,C),\oplus,\odot,\supseteq)$ would be a complete lattice with $$\sup (B_{\alpha})_{\alpha \in A} := \bigcap_{\alpha \in A} B_{\alpha}, \:\:\: \inf (B_{\alpha})_{\alpha \in A} := \cl \conv \bigcup_{\alpha \in A} B_{\alpha}$$ if both sets are in $\mathcal{B}(\R^q,C)$. Clearly, $\sup (B_{\alpha})_{\alpha \in A} \in \mathcal{B}(\R^q,C)$. However, $\cl \conv \bigcup_{\alpha \in {A}} B_{\alpha}$ is not necessarily self-bounded. Consider for instance, $B_x = (x,\; x^2)^T + \R^2_+ \in \mathcal{B}(\R^2,\R^2_+)$ for $x \in \R$. Note that $\cl \conv \bigcup_{x \in \R} B_{x} = \epi x^2 + \R^2_+ \notin \mathcal{B}(\R^q,\R^q_+)$ as $\recc (\epi x^2 + \R^2_+) = \R^2_+$, see also Remark~\ref{rem:notsolid}. 
\end{remark}

The following lemma together with Propositions~\ref{prop:upperset2} and~\ref{prop:upperset3} shows the importance of the concept of self-boundedness in terms of approximations of convex upper closed sets via convex sets of the form $\conv\{a^1,\ldots,a^s\}+K$. 
\begin{lemma}\label{lemma:weierstrass}
Let $A\subseteq \R^q$ be a compact and convex set, $K\subseteq \R^q$ be a non-trivial solid convex cone and $c\in \Int K$ be fixed. For any $\epsilon > 0$, there exists a finite set $\bar{A}\subseteq A$ such that $\conv \bar{A} + K -\epsilon \{c\} \supseteq A + K$. 
\end{lemma}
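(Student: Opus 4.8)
The plan is to use the interiority of $c$ to convert the approximation error that arises from replacing $A$ by a finite subset into a uniform shift by $-\epsilon c$, and to obtain that finite subset from a standard compactness (finite subcover) argument. So the proof will have three short steps: a scaling step that extracts a ball absorbed by $K$, a covering step that produces $\bar A$, and a routine Minkowski-arithmetic step that finishes the inclusion.

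First, since $c \in \Int K$, I would pick $r > 0$ with $B(c,r) \subseteq K$. Because $K$ is a cone, $\epsilon B(c,r) = B(\epsilon c,\epsilon r) \subseteq K$, which rewrites as $B(0,\epsilon r) \subseteq K - \epsilon\{c\}$; i.e., every vector of Euclidean norm at most $\epsilon r$ lies in $K - \epsilon c$. This is the only place where the assumption $c \in \Int K$ enters.

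Next, cover the compact convex set $A$ by the open balls $\{\Int B(a,\epsilon r) : a \in A\}$ and extract a finite subcover with centers $a^1,\dots,a^s \in A$; set $\bar A := \{a^1,\dots,a^s\} \subseteq A$. For an arbitrary $a \in A$ there is an index $i$ with $\norm{a - a^i} \le \epsilon r$, hence $a - a^i \in B(0,\epsilon r) \subseteq K - \epsilon\{c\}$, so $a = a^i + (a - a^i) \in \conv \bar A + K - \epsilon\{c\}$. Therefore $A \subseteq \conv \bar A + K - \epsilon\{c\}$.

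Finally, I would add $K$ to both sides and use $K + K = K$ (which holds for any convex cone): $A + K \subseteq \conv \bar A + K - \epsilon\{c\} + K = \conv \bar A + K - \epsilon\{c\}$, which is the assertion. I do not expect a genuine obstacle here; the only point requiring care is the scaling step that turns $c \in \Int K$ into a ball of radius exactly $\epsilon r$ absorbed by $K$ after the shift by $-\epsilon c$, and then matching the net radius used in the finite subcover to this same $\epsilon r$.
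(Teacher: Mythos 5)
Your proof is correct, and it takes a genuinely simpler route than the paper's. The only substantive idea you need is the quantitative form of $c\in\Int K$, namely a radius $r>0$ with $B(c,r)\subseteq K$, which after scaling by the cone gives $B(0,\epsilon r)\subseteq K-\epsilon\{c\}$; you then cover the compact set $A$ by balls $B(a,\epsilon r)$ centered at points of $A$, take a finite subcover to get $\bar A$, absorb the error $a-a^i$ into $K-\epsilon\{c\}$, and finish with $K+K=K$. All of these steps check out (note only that the open cover actually gives $\norm{a-a^i}<\epsilon r$, which is harmless). The paper instead builds an inflated set $A_\epsilon = \conv\sqb{(A-\epsilon\{c\})\cup A} + \ofg{B(0,2\epsilon\norm{c})\cap K}$, proves the sandwich $A\subseteq\Int A_\epsilon\subseteq A-\epsilon\{c\}+K$, covers $A$ by interiors of convex hulls of finite subsets of $A_\epsilon$, and then pulls the resulting vertices $v^n=a^n-\epsilon c+k^n$ back to points $a^n\in A$. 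The paper's construction produces, as a by-product, a polytope $\conv\{v^1,\ldots,v^s\}$ containing $A$, but for the stated conclusion this is unnecessary; your argument is shorter, avoids the pullback step entirely (your $\bar A$ lies in $A$ by construction), and in fact never uses the convexity of $A$, so it proves the lemma for arbitrary compact $A$.
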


\begin{proof}
Let $B := {B}(0,2\epsilon \norm{c} )\cap K$, where $ \norm{\cdot}$ is the Euclidean norm. Define $$A_{\epsilon} := \conv \left[ ( A-\epsilon \{c\} ) \cup A \right] + B. $$ Note that $A \subseteq \Int A_{\epsilon}$. Indeed, for any $a \in A$, $a- \epsilon c \in  \conv \left[ (A-\epsilon \{c\} ) \cup A \right]$, and $\epsilon c \in \Int B$ as $c\in\Int K$. Furthermore, we have $A_{\epsilon} \subseteq A - \epsilon\{c\}+K$. To see, let $a \in A_{\epsilon}$. Note that $a = \sum_{i\in I}\alpha_i(a^i-\epsilon c)+\sum_{i\in J}\alpha_ia^i + b$, for some $N \in \N$, partition $I,J$ of $\{1,\ldots,N\}$, $a^i \in A$ for $i \in \{1,\ldots,N\}$, $b \in B$, and $\alpha_i \in [0,1]$ with $\sum_{i=1}^N\alpha_i =1$. Now, $a = \sum_{i=1}^N\alpha_i a^i - \epsilon c + (1-\sum_{i \in I}\alpha_i)\epsilon c + b \in A - \epsilon\{c\}+K$, since $\sum_{i=1}^N\alpha_i a^i \in A$, and $(1-\sum_{i\in I}\alpha_i)\epsilon c + b \in K$.

Let $(S_{\alpha})_{\alpha\in I}$ be the collection of all finite subsets of $A_{\epsilon}$ with at least $q+1$ elements. Define $\tilde{S}_{\alpha}:= \Int \conv S_{\alpha}$. $(\tilde{S}_{\alpha})_{\alpha \in I}$ is an open cover for $A$, and there exists a finite subcover as $A$ is compact, that is, there exists $\tilde{s}\in\N\setminus\{0\}$ such that $A \subseteq \bigcup_{n=1}
^{\tilde{s}} \tilde{S}_{\alpha_n} \subseteq \bigcup_{n=1}
^{\tilde{s}} \conv {S}_{\alpha_n}$. Let $\bigcup_{n=1}^{\tilde{s}}S_{\alpha_n}=\{v^1,\ldots,v^s\}$. Clearly, $A \subseteq \conv\{v^1,\ldots, v^s\}$. As $v^n \in {A}_{\epsilon} \subseteq  A - \epsilon\{c\}+K$, there exists $a^n\in A, k^n \in K$ such that $v^n = a^n -\epsilon c +k^n$ for all $n=1,\ldots,s$. Then, $\conv\{v^1,\ldots,v^s\} + K \supseteq A +K $ implies that $\bar{A}=\{a^1,\ldots,a^s\}$ satisfies the assertion.
\end{proof}

\begin{proposition}\label{prop:upperset2}
Let $A \in \mathcal{G}(\R^q,C)$ be bounded with respect to a non-trivial convex pointed cone $K\supseteq C$ and $c\in \Int C$ be fixed. Then, for any $\epsilon >0$, there exists a finite set of points $\bar{A}\subseteq A$ such that 
\begin{equation}\label{eq:innerouter}
\conv{\bar{A}}+K-\epsilon \{c\} \supseteq A.
\end{equation}
\end{proposition}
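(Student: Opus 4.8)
The plan is to reduce the (possibly unbounded, non-compact) set $A$ to the compact-convex situation handled by Lemma~\ref{lemma:weierstrass}, via a truncation-by-a-halfspace argument. Since $A \in \mathcal{G}(\R^q,C)$ is bounded with respect to $K$, fix $y \in \R^q$ with $y + K \supseteq A$. Because $K$ is non-trivial, convex and pointed, its dual cone $K^+$ is solid; pick $z \in \Int K^+$. Then $z^T c > 0$ for the fixed $c \in \Int C \subseteq K$, and for every $a \in A$ we have $a - y \in K$, hence $z^T a \geq z^T y =: \beta$. So $A$ lives in the halfspace $\{u : z^T u \geq \beta\}$.

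First I would carve out the ``compact part'' of $A$. For a large parameter $M > \beta$ set $A_M := A \cap \{u : z^T u \leq M\}$. This is a closed convex set contained in the slab $\beta \leq z^T u \leq M$ and in $y + K$; since $K$ is pointed its recession cone contains no line, and the extra linear bound $z^T u \leq M$ kills all recession directions of $A$ on which $z$ is zero only if — here is where pointedness of $K$ plus $z \in \Int K^+$ matters: $z^T r > 0$ for every $r \in K \setminus \{0\}$, so $A_M$ has trivial recession cone and is therefore compact and convex. Apply Lemma~\ref{lemma:weierstrass} with this $A_M$, the cone $K$, and $c \in \Int C \subseteq \Int K$ (note $c \in \Int K$ since $C \subseteq K$ and $\Int C \neq \emptyset$): for the given $\epsilon > 0$ there is a finite set $\bar A \subseteq A_M \subseteq A$ with $\conv \bar A + K - \epsilon\{c\} \supseteq A_M + K$.

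It remains to choose $M$ large enough that $\conv \bar A + K - \epsilon\{c\}$ also covers the ``tail'' $A \setminus A_M = A \cap \{u : z^T u > M\}$. The idea: pick any single point $a^0 \in A_M$ (e.g. achieving or nearly achieving the minimum of $z^T \cdot$ on $A$) and show that for $M$ sufficiently large, every $a \in A$ with $z^T a > M$ satisfies $a - a^0 + \epsilon c \in K$, i.e.\ $a \in a^0 + K - \epsilon\{c\} \subseteq \conv\bar A + K - \epsilon\{c\}$ provided $a^0 \in \conv \bar A$ — which I can arrange by including $a^0$ among the generators $\bar A$. To get $a - a^0 + \epsilon c \in K$: write $a - a^0 = (a - y) + (y - a^0)$, where $a - y \in K$; the vector $y - a^0$ is fixed, and I want $(a-y) + \epsilon c$ to ``absorb'' it. Using that $a - y \in K$ is large in the $z$-direction (its $z$-component is $z^T a - z^T y \geq M - \beta \to \infty$) while $K$ is a full-dimensional cone containing $c$ in its interior, a standard estimate shows that any element of $K$ with sufficiently large $z$-component, plus $\epsilon c$, remains in $K$ after subtracting a fixed vector — because $B(\epsilon c, \delta) \subseteq K$ for some $\delta > 0$ and scaling up along $K$ widens the admissible perturbation linearly. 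Making this quantitative fixes the threshold $M$.

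The main obstacle is precisely this last tail estimate: controlling $A \cap \{z^T u > M\}$ uniformly. One must be careful that $A$ need not be ``generated'' by finitely many points plus $K$ to begin with (that is the conclusion, not a hypothesis), so the argument has to run purely from $A \subseteq y + K$ together with convexity and closedness. The clean way is: since $c \in \Int K$ there is $\delta>0$ with $c + B(0,\delta) \subseteq K$; then for any $k \in K$ and any $w$ with $\norm{w} \leq \epsilon \delta \cdot (z^Tk)/(z^Tc) $ one checks $k + \epsilon c - w \in K$ by writing it as a positive combination $\mu(c + \text{(small)}) $ with $\mu$ comparable to $z^Tk$; taking $k = a-y$, $w = a^0 - y$ (a fixed vector), the required inequality $\norm{a^0 - y} \leq \epsilon\delta (z^T(a-y))/(z^Tc)$ holds as soon as $z^T a \geq M$ with $M := z^Ty + \norm{a^0-y} z^T c/(\epsilon\delta)$. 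With $\bar A$ the finite set from Lemma~\ref{lemma:weierstrass} applied to $A_M$, augmented by $a^0$, we get $A = A_M \cup (A\setminus A_M) \subseteq \conv\bar A + K - \epsilon\{c\}$, which is \eqref{eq:innerouter}.
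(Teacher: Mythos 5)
Your overall strategy---truncate $A$ to a compact convex piece, apply Lemma~\ref{lemma:weierstrass} to that piece, and then cover the unbounded tail separately---is essentially the paper's strategy (the paper truncates with $B_n = a + nc - C$ rather than a halfspace $\{z^Tu \le M\}$, but that difference is cosmetic). The problem is your tail estimate, which is where all the real work sits, and as written it is false. You claim that for $k \in K$ with $z^Tk$ large one has $k + \epsilon c - w \in K$ for any fixed $w$ with $\norm{w} \le \epsilon\delta\, (z^Tk)/(z^Tc)$, on the grounds that ``scaling up along $K$ widens the admissible perturbation linearly.'' That is only true when $k$ is scaled along an \emph{interior} direction of $K$; along a boundary ray the set $K - k - \epsilon c$ of admissible perturbations contains a ball of radius $\epsilon\delta$ around $0$ but does not grow with $z^Tk$. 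Concretely, take $K = \R^2_+$, $c = (1,1)^T$, $k = (T,0)^T$, $w = (0,1)^T$: then $k + \epsilon c - w = (T+\epsilon,\ \epsilon - 1)^T \notin K$ for $\epsilon < 1$, no matter how large $T$ (hence $z^Tk$) is.

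This is not a repairable constant: the statement you are trying to prove with it---that the tail $A \cap \{z^Tu > M\}$ lies in $a^0 + K - \epsilon\{c\}$ for a \emph{single} point $a^0$ once $M$ is large---is itself false. Take $C = K = \R^2_+$ and $A = \conv\{(0,10)^T,(10,0)^T\} + \R^2_+$. The tail contains both arms $(0,10+T)^T$ and $(10+T,0)^T$; whichever $a^0 \in A$ you choose, at least one of these fails to lie in $a^0 + \R^2_+ - \epsilon(1,1)^T$ for small $\epsilon$ (e.g.\ $a^0=(5,5)^T$ gives $(0,10+T)^T - a^0 + \epsilon c = (-5+\epsilon,\ 5+T+\epsilon)^T \notin \R^2_+$). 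The tail has to be absorbed by the \emph{whole} finite set $\bar A$ produced on the compact part, and the statement you actually need is $A_M + K - \tfrac{\epsilon}{2}\{c\} \supseteq A$ for $M$ large enough; note that even this requires the $-\tfrac{\epsilon}{2}\{c\}$ slack (for $A = \epi(1/x) \cap \Int\R^2_+$ one has $A \not\subseteq A_M + K$ for any $M$). This covering statement is exactly what the paper establishes, by showing that $\epsilon_n := \inf\{\delta>0 \st (B_n\cap A)+K-\delta\{c\}\supseteq A\}$ is finite, decreasing and tends to $0$, and only then invoking the compact lemma with accuracy $\epsilon/2$. A secondary, more minor point: your choice of $z \in \Int K^+$ needs $\cl K$ (not merely $K$) to be pointed for $K^+$ to be solid; this should at least be remarked.
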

\begin{proof}
First, we show that there exists a compact set $B \subseteq A$ such that $B+K -\frac{\epsilon}{2}\{c\}\supseteq A$. Consider the sequence of sets given by $B_n := a + nc -C$ for $n \geq 1$, where $a \in A$ is fixed. Note that $B_{n} \subseteq B_{n+1}$ holds for all $n\geq 1$, and $\bigcup_{n\geq 1}B_n = \R^q$ as $C$ is solid. 
Hence, $\bigcup_{n\geq1}(B_n\cap A) + K \supseteq A$. Note that since $A$ is bounded with respect to $K$, there exists $p \in \R^q$ with $p \leq_{K} \tilde{a}$ for all $\tilde{a}\in B_n\cap A$. Moreover, $\tilde{a} \leq_K a +(n+1)c$ for all $\tilde{a}\in B_n\cap A$. Since both $B_n$ and $A$ are closed and $K$ is pointed, $B_n\cap A$ is compact for all $n\geq 1$. {Let $\epsilon_n := \inf\{\delta >0 \;\st (B_n\cap A)+K-\delta \{c\}\supseteq A\}$. As $A$ is bounded with respect to $K$, $\epsilon_n \in \R$ for all $n \geq 1$. Moreover, $(\epsilon_n)_{n\geq 1}$ is decreasing and $\lim_{n\rightarrow\infty}\epsilon_n = 0$ since $(B_n\cap A)+K \subseteq (B_{n+1}\cap A)+K$, and $\cup_{n\geq 1}(B_n\cap A)+K \supseteq A$.} Then, there exists $N>0$ such that $\epsilon_n < \frac{\epsilon}{2}$ for $n > N$ and $B = (B_{N+1}\cap A)$ satisfies the required property.\\
By Lemma~\ref{lemma:weierstrass}, there exists a finite set $\bar{A}\in B$ such that $\conv{\bar{A}}+K-\frac{\epsilon}{2} \{c\} \supseteq B+K$. Then, we have $\conv{\bar{A}}+K-\epsilon \{c\} \supseteq B+K - \frac{\epsilon}{2} \{c\}\supseteq A$. 
\end{proof}
\begin{remark}\label{rem:selfbdd}
Note that for $A\in \mathcal{G}(\R^q,C)$, $\recc A$ is a non-trivial convex cone and $\recc A \supseteq C$. Moreover, if $A$ is self-bounded, then by Proposition~\ref{prop:upperset2}, it is possible to generate finite outer approximation to $A$ using $\recc A$. Indeed, for any $\epsilon>0$ there exists a finite subset $\bar{A}$ of $A$ such that $A^{\text{out}}:=\conv{\bar{A}}+\recc A-\epsilon \{c\} \supseteq A$. Moreover, $\bar{A}$ also generates an inner approximation as $A^{\text{in}}:=\conv \bar{A} +\recc A \subseteq A$. {It is clear that the Hausdorf distance between the inner and the outer approximations is bounded}, namely, $h(A^{\text{out}},A^{\text{in}}) \leq \epsilon \norm{c}$.
\end{remark}

The following proposition shows that if $A$ is not self-bounded as in Definition~\ref{defn:bdd}, then it is not possible to find a polyhedral outer approximation $A^{\text{out}}\supseteq A$ such that the Hausdorff distance between $A^{\text{out}}$ and $A$ is finite. 

\begin{proposition}\label{prop:upperset3}
Let $A \in \mathcal{G}(\R^q,C)$ be not self-bounded but bounded with respect to $K$ for some non-trivial closed convex cone $K$. Let a finite set $\bar{Y} \subseteq \R^q$ satisfy $\conv \bar{Y} +K \supseteq A$. Then, $h(\conv \bar{Y}+K, A) = \infty$.
\end{proposition}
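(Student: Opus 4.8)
The plan is to exploit the gap between the recession cones: since $A$ is not self-bounded, its recession cone $\recc A$ cannot "capture" $A$ by any single translate, whereas the polyhedral set $\conv\bar Y + K$ has recession cone equal to $K$ (as $\bar Y$ is finite), and $K \supseteq \recc A$. I would first argue that the inclusion $K \supseteq \recc A$ must in fact be \emph{strict}: if $K = \recc A$, then $\conv\bar Y + K \supseteq A$ would exhibit $A$ as bounded with respect to $\recc A$, contradicting the assumption that $A$ is not self-bounded. So pick a direction $k^* \in K \setminus \recc A$. Because $k^* \notin \recc A$, there exists a point $a^* \in A$ and a scalar $\bar\alpha > 0$ with $a^* + \bar\alpha k^* \notin A$; in fact, since $A$ is closed and convex, the ray $\{a^* + \alpha k^* : \alpha \geq 0\}$ leaves $A$ permanently once it leaves, so $a^* + \alpha k^* \notin A$ for all $\alpha \geq \bar\alpha$.

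The core of the argument is a separation/distance estimate: I want to show $\sup_{y \in \conv\bar Y + K} d(y, A) = \infty$, which suffices for $h(\conv\bar Y + K, A) = \infty$ by definition~\eqref{eq:Hausdorff_dist}. The natural candidate points are $y_\alpha := a^* + \alpha k^* \in \conv\bar Y + K$ (they lie in this set because $a^* \in A \subseteq \conv\bar Y + K$ and $\alpha k^* \in K$) for large $\alpha$. I would estimate $d(y_\alpha, A)$ from below. For each $\alpha$, let $b_\alpha \in A$ be a (near-)closest point to $y_\alpha$; I want to show $\norm{y_\alpha - b_\alpha} \to \infty$. Suppose not: then along a subsequence $\norm{y_\alpha - b_\alpha} \leq M$ for some bound $M$, so $b_\alpha = a^* + \alpha k^* + e_\alpha$ with $\norm{e_\alpha} \leq M$. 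Since $A$ is convex and contains both $a^*$ and $b_\alpha$, it contains the segment between them; parametrizing, for any fixed $\alpha_0 < \alpha$ the point $a^* + \alpha_0 k^* + \tfrac{\alpha_0}{\alpha} e_\alpha$ lies in $A$, and letting $\alpha \to \infty$ (with $\alpha_0$ fixed) the perturbation term vanishes, so by closedness $a^* + \alpha_0 k^* \in A$ — contradicting $a^* + \alpha_0 k^* \notin A$ for $\alpha_0 \geq \bar\alpha$. Hence $d(y_\alpha, A) \to \infty$, giving the claim.

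The step I expect to be the main obstacle is making the convexity argument of the previous paragraph fully rigorous: specifically, controlling the closest points $b_\alpha$ and arguing that a uniform bound on $\norm{y_\alpha - b_\alpha}$ forces an entire sub-ray of $\{a^* + \alpha k^*\}$ back into $A$. One must be careful that $b_\alpha$ exists (it does, since $A$ is closed, so the infimum defining $d(y_\alpha,A)$ is attained) and that the "shrinking" of the error term $\tfrac{\alpha_0}{\alpha} e_\alpha$ is handled with a genuine subsequence where $\norm{e_\alpha}$ stays bounded. An alternative, perhaps cleaner, route avoiding closest-point technicalities: use a supporting hyperplane. Since $a^* + \bar\alpha k^* \notin A$ and $A$ is closed convex, there is $z \in \R^q \smz$ with $\sup_{a \in A} z^T a < z^T(a^* + \bar\alpha k^*)$; one checks $z \in (\recc A)^+$ but, because $k^* \in K$ pushes the linear functional $z^T(\cdot)$ to $+\infty$ along the ray in $\conv\bar Y + K$, one gets $z^T k^* > 0$ while $z^T a$ is bounded above on $A$, so $d(y_\alpha, A) \geq (z^T y_\alpha - \sup_{a\in A} z^T a)/\norm{z} \to \infty$. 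I would present this hyperplane version as the main line, as it turns the whole proof into a one-line distance bound once the separating $z$ with $z^T k^* > 0$ is produced.
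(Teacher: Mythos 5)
Your main line (the supporting-hyperplane version) is correct and is essentially the paper's own argument: both pick $\bar k\in K\setminus\recc A$ (the strict inclusion $K\supsetneq\recc A$ following directly from the hypothesis that $A$ is bounded with respect to $K$ but not self-bounded), strictly separate from $A$ a point of a ray in direction $\bar k$ that has left $A$ to obtain a functional $z$ with $z^T\bar k>0$ and $\sup_{a\in A}z^Ta<\infty$, and conclude that $d(y_\alpha,A)$ grows at least linearly along the ray, forcing $h(\conv\bar Y+K,A)=\infty$. Anchoring the ray at a point $a^*\in A$ rather than at some $\bar y\in\bar Y$ as the paper does is a mild simplification, and your first (closest-point) variant is also sound but, as you note, less clean; otherwise the two proofs coincide.
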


\begin{proof}
Since $A$ is not self-bounded but bounded with respect to $K$, $K \supsetneq \recc A$. Then, there exists $\bar{k}\in K \setminus \recc A$. For any $\bar{y}\in \bar{Y}$, there exists $M\geq 0$ such that $\{\bar{y}+\lambda \bar{k} \st \lambda \geq M \} \cap A = \emptyset$ as $A$ is convex and $\bar{k}$ is not a recession direction. Let $\bar{a}\in A$ be such that $\norm{\bar{a}-\bar{y}-M\bar{k}} = d(\bar{y}+M\bar{k}, A)$. As $\bar{k} \notin \recc A$, there exists $\alpha>0$ such that $\bar{a}+\alpha \bar{k} \notin A$. Then, there exists $\gamma \in \R^q\setminus \{0\}$ such that $\gamma^T\bar{a} + \alpha \gamma^T\bar{k} > \sup_{a\in A}\gamma^Ta$. Clearly, $\gamma^T\bar{k} > 0$. Let $H = \{y \in \R^q \st \gamma^Ty = \gamma^T\bar{a}+ \alpha \gamma^T \bar{k}\}$ and $\bar{H} = \{y \in \R^q \st \gamma^Ty \leq \gamma^T\bar{a}+ \alpha \gamma^T \bar{k}\} \supseteq A$. 
Consider $y^n := \bar{y}+ (M+n) \bar{k}$. On the one hand, as $\gamma^T\bar{k} > 0$, there exists $N \geq 1$ such that $y^n \notin \bar{H}$ for $n \geq N$. Let $d_n:=d(y^n, \bar{H})$. Then, for $n \geq N$, $d_n \leq d(y^n,A) \leq h(\conv \bar{Y} +K,A)$ as $y^n \in \conv \bar{Y}+K$. On the other hand, $y^n$ can be written as $y^n = y + d_n \frac{\gamma}{\norm{\gamma}}$ for some $y \in H$ as $\frac{\gamma}{\norm{\gamma}}$ is the unit normal vector to $H$. Then, for $n > N$, we have
\begin{align*}
d_n &= \frac{1}{\norm{\gamma}}(\gamma^Ty^n-\gamma^Ty)\\
&= \frac{1}{\norm{\gamma}} (\gamma^T y^N + (n-N)\gamma^T\bar{k} - \gamma^T\bar{a}- \alpha\gamma^T\bar{k}) \\
& > \frac{1}{\norm{\gamma}} (\gamma^T\bar{a}+\alpha\gamma^T\bar{k} + (n-N)\gamma^T\bar{k} - \gamma^T\bar{a}+ \alpha\gamma^T\bar{k}) \\
&= \frac{(n-N)\gamma^T\bar{k}}{\norm{\gamma}}
\end{align*}
Then, $h(\conv \bar{Y} +K,A) \geq \lim_{n \rightarrow \infty}d_n = \infty$.
\end{proof}

\section{Convex Vector Optimization}
\label{sect:CVOP}

\subsection{Problem Setting and Solution Concepts}
\label{subsect:Problemsetting}

A convex vector optimization problem (CVOP) with ordering cone $C$ is to
\begin{align*} \label{(P)}
 \text{minimize~} f(x) \text{~with respect to~} \leq_C \text{~subject to~}  g(x) \leq_D 0, \tag{P}
\end{align*}
where $C\subseteq\R^q$, and $D\subseteq\R^m$ are non-trivial pointed convex ordering cones with nonempty interior, $X\subseteq \R^n$ is a convex set, the vector-valued objective function $f: X \rightarrow \R^q$ is $C$-convex, and the constraint function $g: X \rightarrow \R^m$ is $D$-convex (see e.g., \cite{luc}). Note that the feasible set $\mathcal{X}:=\{x\in X: g(x)\leq_D 0 \} \subseteq X \subseteq\R^n$ of \eqref{(P)} is convex. Throughout we assume that \eqref{(P)}  is feasible, i.e., $\mathcal{X}\neq \emptyset$. The image of the feasible set is defined as $f(\mathcal{X}) = \{ f(x) \in \R^q : x \in \mathcal{X}\}$. The set 
\begin{equation}\label{upim1}
 \mathcal{P} := \cl(f(\mathcal{X})+C)	
\end{equation}
is called the \emph{upper image} of \eqref{(P)} (or {\em upper closed extended image of \eqref{(P)}}, see \cite{geometric}). Clearly, $\mathcal{P}$ is convex and closed. Hence $\mathcal{P}\in \mathcal{G}(\R^q,C)$. Moreover, $\bd \mathcal{P} \cap f(\mathcal{X})=\wMinc f(\mathcal{X})$, see, for instance, Proposition 4.1 in \cite{ehrgott}. 
\begin{remark}
	Proposition~4.1 in~\cite{ehrgott} also states that $\mathcal{P}$ is bounded with respect to $\R^q_+$ but this is true only under the assumption that the feasible region $\mathcal{X}$ is bounded, see Example~\ref{ex:expon} for a simple counterexample. 
\end{remark}
 
\begin{definition}\label{bdd}
Let $K$ be a closed convex cone such that $K \supseteq C$. Problem \eqref{(P)} is said to be \emph{bounded with respect to $K$} if $\mathcal{P}\in\mathcal{G}(\R^q,C)$ is bounded with respect to $K$. \eqref{(P)} is said to be \emph{bounded} if it is bounded with respect to $C$ and \emph{unbounded} if it is not bounded.
\end{definition}

There are different solution concepts regarding CVOPs. The following 
is a well-known solution concept which is also known as a (weakly) efficient solution or (weakly) Pareto optimal solution of~\eqref{(P)}. 
\begin{definition} \label{defn:minimizer}
An element $\bar{x}$ of $\mathcal{X}$ is said to be a \emph{minimizer} if $f(\bar{x}) \in \Minc f(\mathcal{X})$ and a \emph{weak minimizer} if $f(\bar{x}) \in \wMinc f(\mathcal{X})$.
\end{definition} 

Note that the image of a (weak) minimizer is a single point on the boundary of the upper image. For bounded convex vector optimization problems, an $\epsilon$-solution concept which generates inner and outer approximations to the whole upper image is given in~\cite{cvop} as follows.
\begin{definition}\cite[Definition 3.3]{cvop}\label{defn:weaksoln}
For a bounded problem \eqref{(P)}, a nonempty finite set $\mathcal{\bar{X}}$ of (weak) minimizers is called a \emph{finite (weak) $\epsilon$-solution of \eqref{(P)}} if 
\begin{equation} \label{eqweaksoln}
\conv f(\mathcal{\bar{X}})+C-\epsilon \{c\} \supseteq \mathcal{P}.
\end{equation}
\end{definition}

Clearly, this definition suggests polyhedral inner and outer approximations to the upper image as follows $$\conv f(\mathcal{\bar{X}})+C-\epsilon \{c\} \supseteq \mathcal{P} \supseteq \conv f(\mathcal{\bar{X}})+C.$$ 

Note that if problem~\eqref{(P)} is bounded with respect to $K$ for some non-trivial closed convex cone $K \supseteq C$ and moreover if $K$ satisfies $\mathcal{P}+K \subseteq \mathcal{P}$, then the problem, where cone $K$ is taken as the ordering cone, is equivalent to the original problem. In other words, $\cl (f(\mathcal{X})+C) = \cl (f(\mathcal{X})+K)$ and hence $\wMinc f(\mathcal{X}) = \wMinK f(\mathcal{X})$. If such $K$ exists, then it has to be the recession cone $\recc \mathcal{P}$ of the upper image by Proposition~\ref{prop:upperset1}. In the following definition, we suggest that we call a problem self-bounded if such cone $K$ exists.  

\begin{definition}\label{defn:selfbddP}
\eqref{(P)} is said to be \emph{self-bounded} if $\mathcal{P}\neq \R^q$ and if~\eqref{(P)} is bounded with respect to $\recc\mathcal{P}$. 
\end{definition}

By Proposition~\ref{prop:upperset3}, it is known that if a problem is not self-bounded then for any polyhedral outer approximation to the upper image, the Hausdorff distance between the outer approximation and the upper image is not finite. In particular, there exists no finite weak $\epsilon$- solution of~\eqref{(P)}. The following is a trivial example of a not self-bounded CVOP. 

\begin{ex}\label{ex:expon}
	Consider the biobjective optimization problem where the ordering cone is the positive orthant $\R^2_+$, the two objective functions to be minimized are $f_1(x) = x$, $f_2(x)= e^{-x}$ and the feasible region is the real line, $\mathcal{X}=\R$. Clearly, the image of the feasible region is the graph of $f_2$, namely $f(\mathcal{X})=\{(x,y)\in\R^2 \st y = e^{-x}\}$ and the upper image is the epigraph of $f_2$, $\mathcal{P}=\{(x,y)\in\R^2 \st y \geq e^{-x}\}$. Note that the problem is not bounded since for any $(x,y) \in \R^2$, we have $(x,y) + \R^2_+ \nsupseteq \mathcal{P}$. Moreover, one can easily check that the recession cone of $\mathcal{P}$ is $\R^2_+$. Hence, this problem is not self-bounded and it is not possible to find a polyhedral outer approximation to the upper image for a given error bound. 
\end{ex}

As seen in the above example, there are convex vector optimization problems that are not \emph{tractable} in terms of having polyhedral outer approximations and clearly problems that are not self-bounded are not tractable in that sense. The following example provides a non-linear convex tractable problem with a non-compact feasible region.

\begin{ex}
Consider the biobjective optimization problem with a solid ordering cone $C \subsetneq \R^2_+$, where the two objective functions to be minimized are $f_1(x) = x$, $f_2(x)= x^{-1}$ and the feasible region is the positive real line. Clearly, $\mathcal{P} = \epi f_2 \cap \Int \R^2_+$ and $\recc \mathcal{P}= \R^2_+$. The problem is not bounded as the ordering cone is strictly smaller than the positive orthant. However, it is self-bounded, hence tractable in the sense of polyhedral approximations. 
\end{ex}

Assume for now that problem~\eqref{(P)} is self-bounded. Clearly, if one can compute the recession cone~$\recc \mathcal{P}$ of the upper image~$\mathcal{P}$ and if the recession cone is polyhedral and pointed, then it is possible to apply the approximation algorithms for bounded CVOPs in the literature, where the ordering cone is taken as $\recc\mathcal{P}$, see \cite{ehrgott,cvop}. 
{Note that, it is in general difficult to check if the recession cone of the upper image is polyhedral and pointed. However, it is trivially polyhedral if the objective function is $\R^2$-valued for instance. In this case, it would be either pointed or a halfspace. Note that if the recession cone of the upper image of a two-dimensional CVOP is a halfspace, then the upper image itself is a halfspace by convexity. Then one could simplify the problem to a linear vector optimization problem and apply for instance, the parametric simplex algorithm from \cite{lvopalg}, which works even if the upper image is a halfspace.}

In each iteration of algorithms provided in~\cite{ehrgott,cvop}, a scalarized problem is solved. In particular, the \emph{weighted sum scalarization} of~\eqref{(P)}, which is given by
\begin{align} \label{P1}  \tag{P$_w$}
\min \left\{w^Tf(x) \st x \in \mathcal{X} \right\},  
\end{align}
for $w \in \R^q$, is solved in each iteration of the geometric dual algorithm. It is also solved at the initialization step of the 'primal' Benson's algorithm. The followings are well-known results, see e.g., \cite{jahn,luc}. 
\begin{proposition} \label{dualprop1}
	Let $w \in C^+\setminus\{0\}$. An optimal solution $x^w$ of \eqref{P1} is a weak minimizer of \eqref{(P)}.
\end{proposition}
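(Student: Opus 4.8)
The plan is to argue by contradiction, using the elementary fact that a nonzero element of the dual cone $C^+$ is strictly positive on the interior of $C$. So the first step I would record is: if $w \in C^+\setminus\{0\}$ and $d \in \Int C$, then $w^Td > 0$. This is immediate from solidity of $C$ (which is part of the standing assumptions on \eqref{(P)}): since $\Int C$ is open, there is $r>0$ with $B(d,r)\subseteq C$, hence $d - r\,w/\norm{w} \in C$, and the defining inequality of $C^+$ gives $w^Td \geq r\norm{w} > 0$.

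Next, suppose toward a contradiction that $x^w$ is not a weak minimizer of \eqref{(P)}, i.e., $f(x^w)\notin\wMinc f(\mathcal{X})$. By Definition~\ref{defn:minimizer} together with the definition of a weakly $C$-minimal element, this means $\of{\{f(x^w)\} - \Int C}\cap f(\mathcal{X}) \neq \emptyset$; pick $x\in\mathcal{X}$ with $d := f(x^w) - f(x) \in \Int C$. Applying the first step to $d$ yields $w^Tf(x^w) - w^Tf(x) = w^Td > 0$, so $w^Tf(x) < w^Tf(x^w)$ with $x\in\mathcal{X}$, contradicting the optimality of $x^w$ for \eqref{P1}. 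Hence $f(x^w)\in\wMinc f(\mathcal{X})$, i.e., $x^w$ is a weak minimizer of \eqref{(P)}.

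There is no real obstacle here; the only point requiring a moment's thought is the strict-positivity fact in the first step, and that reduces to solidity of the ordering cone. It is worth noting that the convexity hypotheses on $f$, $g$ and $\mathcal{X}$ play no role in this particular statement — it concerns only the geometry of the set $f(\mathcal{X})$ relative to $C$ — so no convex-analysis machinery is invoked; convexity becomes relevant only for the stronger converse-type results (connecting weak minimizers to the boundary of $\mathcal{P}$ and to solvability of \eqref{P1}).
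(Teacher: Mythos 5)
Your proof is correct. The paper does not actually prove this proposition — it states it as a well-known result with a citation to \cite{jahn,luc} — and your argument (strict positivity of $w\in C^+\setminus\{0\}$ on $\Int C$ via solidity of $C$, then a contradiction with optimality of $x^w$ for \eqref{P1} if $f(x^w)$ were strictly dominated) is exactly the standard proof those references give; your side remark that convexity is not needed for this direction is also accurate.
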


\begin{theorem}[{\cite[Corollory 5.3]{jahn}}]
	\label{thm:scalarizations}
	If $\mathcal{X}\subset \R^n$ is a non-empty closed set and~\eqref{(P)} is a convex problem, then for each weak minimizer $\bar{x}$ of~\eqref{(P)}, there exists $w \in C^+\setminus \{0\}$ such that $\bar{x}$ is an optimal solution to~\eqref{P1}.
\end{theorem}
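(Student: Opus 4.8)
The plan is to read off $w$ from a supporting hyperplane to the upper image at the point $f(\bar x)$. Since $\bar x$ is a weak minimizer, $f(\bar x)\in\wMinc f(\mathcal X)$, and by the boundary characterization recalled above (Proposition~4.1 in~\cite{ehrgott}), $\wMinc f(\mathcal X)=\bd\mathcal P\cap f(\mathcal X)$; in particular $f(\bar x)\in\bd\mathcal P$, so $\mathcal P$ is a proper closed convex subset of $\R^q$. The supporting hyperplane theorem then provides $w\in\R^q\setminus\{0\}$ (with appropriate sign) such that $w^Ty\geq w^Tf(\bar x)$ for all $y\in\mathcal P$. First I would check that $w\in C^+$: since $f(\bar x)\in f(\mathcal X)$ and $0\in C$, we have $f(\bar x)+C\subseteq f(\mathcal X)+C\subseteq\mathcal P$, hence $w^T(f(\bar x)+c)\geq w^Tf(\bar x)$, i.e.\ $w^Tc\geq0$, for every $c\in C$; thus $w\in C^+\setminus\{0\}$. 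Finally, $f(\mathcal X)\subseteq\mathcal P$ gives $w^Tf(x)\geq w^Tf(\bar x)$ for all $x\in\mathcal X$, which is exactly the assertion that $\bar x$ is an optimal solution of~\eqref{P1}.

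If one prefers an argument not invoking the boundary characterization, the alternative is a direct separation. One first verifies that $f(\mathcal X)+C$ is convex, using $C$-convexity of $f$ together with convexity of $\mathcal X$ (for $x_1,x_2\in\mathcal X$, $\alpha\in[0,1]$, one has $\alpha(f(x_1)+c_1)+(1-\alpha)(f(x_2)+c_2)=f(\alpha x_1+(1-\alpha)x_2)+c$ for some $c\in C$). Weak minimality of $\bar x$ is equivalent to $\bigl(f(\mathcal X)+C\bigr)\cap\bigl(f(\bar x)-\Int C\bigr)=\emptyset$: an intersection point would yield $f(x)=f(\bar x)-(k+c)$ with $k\in\Int C$, $c\in C$ and $k+c\in\Int C$, contradicting $f(\bar x)\in\wMinc f(\mathcal X)$. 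Applying the Eidelheit separation theorem to the convex set $f(\mathcal X)+C$ and the nonempty open convex set $f(\bar x)-\Int C$ yields $w\neq0$ and $\gamma\in\R$ with $w^T(f(x)+c)\geq\gamma\geq w^T(f(\bar x)-k)$ for all $x\in\mathcal X$, $c\in C$, $k\in\Int C$. Letting $c$ run through a ray of $C$ forces $w^Tc\geq0$ on $C$, so $w\in C^+\setminus\{0\}$; taking $c=0$ and letting $k\to0$ (legitimate since $0\in\cl\Int C=\cl C$) gives $w^Tf(x)\geq\gamma\geq w^Tf(\bar x)$ for all $x\in\mathcal X$, and evaluating at $x=\bar x$ shows $\gamma=w^Tf(\bar x)$; again $\bar x$ solves~\eqref{P1}.

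I do not expect a genuine obstacle: this is the classical weak-efficiency scalarization and the substance is careful bookkeeping. The points requiring attention are invoking the correct separation statement (the ``one set open'' version, since $f(\mathcal X)+C$ need not be closed and $f(\mathcal X)$ by itself need not be convex), passing from a sign condition on a ray to membership in $C^+$ (using that $C$ is a cone), and the limit $k\to0$ (using $0\in\cl\Int C=\cl C$, valid because $C$ is a convex cone with nonempty interior). Note that closedness of $\mathcal X$ is not actually used in this direction, and $D$-convexity of $g$ enters only through convexity of the feasible set $\mathcal X$.
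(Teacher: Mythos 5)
The paper offers no proof of this statement --- it is quoted directly as \cite[Corollary 5.3]{jahn} --- and both of your arguments are correct, with the second one (separating the convex set $f(\mathcal{X})+C$ from the nonempty open convex set $f(\bar x)-\Int C$, then extracting $w\in C^+\setminus\{0\}$ from the cone directions and $\gamma=w^Tf(\bar x)$ in the limit $k\to 0$) being essentially the classical proof given in the cited reference. Your side remarks are also accurate: closedness of $\mathcal{X}$ plays no role in this direction, and the first argument is just the second one repackaged through the upper image $\mathcal{P}$ via the boundary characterization $\wMinc f(\mathcal{X})=\bd\mathcal{P}\cap f(\mathcal{X})$ that the paper recalls.
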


\subsection{Self-Bounded Problems}	\label{subsect:selfbdd}
In this section, we consider self-bounded problems and provide a characterization result in terms of weighted sum scalarization problems. 

Recall that a vector optimization problem is said to be linear if the objective function is $f(x)=Px$ where $P \in \R^{q\times n}$, the feasible region $\mathcal{X}$ and the ordering cone are polyhedral. Note that LVOPs are self-bounded as long as the upper image $\mathcal{P}$ is not the whole space. Indeed, for a LVOP with $\emptyset\neq\mathcal{P}\neq\R^q$, the recession cone of the upper image $\recc \mathcal{P}$ is polyhedral, it can be computed by solving the so called \emph{homogeneous problem}, and problem~\eqref{(P)} is bounded with respect to $\recc \mathcal{P}$, see, for instance, \cite{lohne} for the details. Moreover, it is also known that for linear problems we have $(\recc \mathcal{P})^+ = \{w \in C^+ \st \eqref{P1} \text{~is bounded}\}$, see~\cite{lvopalg}.

In order to give a characterization of the self-bounded convex vector optimization problems we define 
\begin{equation}\label{eq:B}
W := \{w \in C^+ \st \eqref{P1} \text{~is bounded}\}.
\end{equation} 
\begin{remark}\label{rem:Wcone}
It is easy to show that $W$ is a convex cone. Note also that $W$ is not necessarily a closed cone. Consider Example~\ref{ex:expon}. Note that $w = (r, \:\: 0)^T \in \R^2_+$ makes the sum scalarization problem~\eqref{P1} unbounded for any $r>0$. On the other hand, for any other $w \in \R^2_+$, \eqref{P1} is bounded.	Hence, $W = \Int \R^2_+ \cup \{(0, \: r)^T \st r\geq 0\}$, and this is not a closed set.
\end{remark}

\begin{remark}\label{rem:barrier}
	Note that ~\eqref{P1} can be reformulated as $\min \left\{w^Ty \st y \in f(\mathcal{X})+C \right\}$. Then, $W$ is the negative of the barrier cone of the upper image, namely $W = -b(\mathcal{P})$, where \begin{align*}
	b(\mathcal{P}):= \{w\in \R^q \st \sup_{y\in\mathcal{P}}w^Ty < +\infty\}.
	\end{align*} 
	It is known that for any nonempty closed convex set $A \in \R^q$, $$\cl b(A) = (\recc A)^- = \{w\in \R^q \st w^Ty \leq 0 \text{~for all~} y \in \recc A\},$$ see, for instance,~\cite{adly2004,zalinescu}. Moreover, if $A$ is \emph{hyperbolic}, that is, if $A \subseteq D + \recc A$ for some bounded $D \subseteq \R^q$, then $b(A)$ is closed, see \cite[Proposition 1.1]{goossens1986}. 
\end{remark}

The following results relates the recession cone of the upper image and $W$ for convex vector optimization problems. Indeed, it is seen that the above-mentioned result for LVOPs holds also for the self-bounded CVOPs.

\begin{proposition} \label{prop:BinPinf}
It is true that $(\recc \mathcal{P})^+ = \cl W$. Moreover, if problem~\eqref{(P)} is self-bounded, then $(\recc\mathcal{P})^+ = W$.  
\end{proposition}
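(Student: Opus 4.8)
The plan is to route everything through the barrier cone of the upper image, exactly as set up in Remark~\ref{rem:barrier}. Since $\mathcal{X}\neq\emptyset$, the set $\mathcal{P}=\cl(f(\mathcal{X})+C)$ is a nonempty closed convex subset of $\R^q$, so the facts recalled there apply: $W=-b(\mathcal{P})$, and $\cl b(\mathcal{P})=(\recc\mathcal{P})^-=\{w\in\R^q\st w^Ty\le 0\text{ for all }y\in\recc\mathcal{P}\}$. For the first assertion I would then simply chase closures through the linear homeomorphism $w\mapsto -w$: $\cl W=\cl(-b(\mathcal{P}))=-\cl b(\mathcal{P})=-(\recc\mathcal{P})^-=(\recc\mathcal{P})^+$, where the last equality is immediate from the definitions of the positive and negative dual cones.

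For the second assertion it suffices to show that self-boundedness forces $W$ to be closed, since then $W=\cl W=(\recc\mathcal{P})^+$ by the first part. Unwinding Definition~\ref{defn:selfbddP} (and the meaning of boundedness in Definition~\ref{bdd}), there is $y\in\R^q$ with $\mathcal{P}\subseteq\{y\}+\recc\mathcal{P}$; as the singleton $\{y\}$ is bounded, this says precisely that $\mathcal{P}$ is hyperbolic in the sense recalled in Remark~\ref{rem:barrier}. Hence $b(\mathcal{P})$ is closed by \cite[Proposition~1.1]{goossens1986}, so $W=-b(\mathcal{P})$ is closed, which finishes the argument.

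I do not expect a serious obstacle once Remark~\ref{rem:barrier} is in hand: the whole thing is a short exercise with dual cones plus one appeal to ``hyperbolic $\Rightarrow$ barrier cone closed''. The one point that is not pure bookkeeping — and the reason to go through the barrier cone at all — is the reverse inclusion $(\recc\mathcal{P})^+\subseteq\cl W$ in the first assertion. Its companion inclusion $\cl W\subseteq(\recc\mathcal{P})^+$ is elementary directly (a $w$ for which \eqref{P1} is bounded cannot satisfy $w^Tr<0$ for any $r\in\recc\mathcal{P}$, and $(\recc\mathcal{P})^+$ is closed), but approximating a given $w\in(\recc\mathcal{P})^+$ by weights for which \eqref{P1} is bounded is not as naive as perturbing toward $\Int C^+$: strict positivity with respect to $C$ does not make \eqref{P1} bounded when $\recc\mathcal{P}$ is strictly larger than $C$. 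This is exactly why one wants the identity $\cl b(\mathcal{P})=(\recc\mathcal{P})^-$ (equivalently, a separation argument) rather than a hands-on perturbation.
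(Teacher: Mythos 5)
Your proof is correct and takes essentially the same route as the paper: the paper's own proof consists of the single line ``The statements follow by Remark~\ref{rem:barrier},'' and your argument is precisely the unwinding of that remark ($W=-b(\mathcal{P})$, $\cl b(\mathcal{P})=(\recc\mathcal{P})^-$, and hyperbolicity of $\mathcal{P}$ under self-boundedness forcing $b(\mathcal{P})$ to be closed). Nothing further is needed.
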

\begin{proof}
The statements follow by Remark~\ref{rem:barrier}. 
\end{proof}

It is clear that self-boundedness of problem~\eqref{(P)} also guarantees that $W$ is a closed set. Next, we show that $W = (\recc\mathcal{P})^+$ implies that problem \eqref{(P)} is self-bounded as long as $\mathcal{P}\neq \R^q$. 

For the following lemma and the theorem, consider a basis of $(\recc \mathcal{P})^+$ given by 
\begin{equation}\label{eq:Lambda}
\Lambda := \{w \in (\recc\mathcal{P})^+ \st w^Tc = 1\},
\end{equation} where $c \in \Int C$ is fixed. Note that $\Lambda$ is a compact set. Moreover, as $\recc \mathcal{P} \supseteq C$, we have $(\recc \mathcal{P})^+\subseteq C^+$. Then, $\Lambda = \emptyset$ holds if only if $(\recc \mathcal{P})^+=\{0\}$, hence $\recc \mathcal{P} = \R^q$ and $\mathcal{P}= \R^q.$

\begin{lemma}\label{lemma:Lambda}
Assume $\{0\} \neq (\recc\mathcal{P})^+ \subseteq W$. Then $$\mathcal{P}^0 := \bigcap_{w \in \Lambda}\{y\in \R^q \st w^Ty \leq \inf_{x\in \mathcal{X}}w^Tf(x)\}\neq \emptyset.$$
\end{lemma}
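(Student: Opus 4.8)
The plan is to show that $\mathcal{P}^0$ is nonempty by constructing an explicit point in it, exploiting the fact that $\Lambda$ is compact and that every $w \in \Lambda \subseteq (\recc\mathcal{P})^+ \subseteq W$ makes the weighted sum scalarization \eqref{P1} bounded, so that $\beta(w) := \inf_{x\in\mathcal{X}}w^Tf(x)$ is a finite real number for each $w \in \Lambda$. The halfspace $H_w := \{y \in \R^q \st w^Ty \leq \beta(w)\}$ is then a genuine (nonempty) supporting halfspace of $\mathcal{P}$, and $\mathcal{P}^0 = \bigcap_{w\in\Lambda} H_w$. So the task reduces to showing that this (possibly infinite) intersection of halfspaces, whose normal directions $w$ all lie in the compact base $\Lambda$ of the pointed-dual cone $(\recc\mathcal{P})^+$, is nonempty.

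First I would record the structural facts: since $\recc\mathcal{P}\supseteq C$ and $C$ is solid, $(\recc\mathcal{P})^+\subseteq C^+$, and moreover $(\recc\mathcal{P})^+$ is a closed convex cone whose base $\Lambda$ (sliced by $w^Tc=1$, $c\in\Int C$) is compact and nonempty under the hypothesis $(\recc\mathcal{P})^+\neq\{0\}$. Next I would pick any $\bar x \in \mathcal{X}$ and set $\bar y := f(\bar x)$; then $\bar y \in \mathcal{P}$. The natural candidate for a point in $\mathcal{P}^0$ is something of the form $\bar y - t c$ for $t\geq 0$ large enough: indeed $w^T(\bar y - tc) = w^T\bar y - t$ for $w\in\Lambda$ (using $w^Tc=1$), and we need this to be $\leq \beta(w)$, i.e. $t \geq w^T\bar y - \beta(w)$. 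So it suffices to show that the quantity $\sup_{w\in\Lambda}\bigl(w^T\bar y - \beta(w)\bigr)$ is finite. The map $w\mapsto w^T\bar y$ is continuous (hence bounded) on the compact set $\Lambda$, so everything comes down to showing that $w \mapsto \beta(w) = \inf_{x\in\mathcal{X}}w^Tf(x)$ is bounded below on $\Lambda$, equivalently that $w\mapsto -\beta(w)$ is bounded above on $\Lambda$.

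The main obstacle is precisely this last boundedness claim: $\beta$ is a concave function on the convex cone $W$ (an infimum of linear functions of $w$), but a priori it could fail to be upper semicontinuous, so finiteness at each point of the compact set $\Lambda$ does not immediately give a uniform lower bound. I would handle this by a concavity/compactness argument: $\beta$ is concave on $W$ and $\Lambda$ is a compact convex subset of the relative interior situation — more carefully, I would show $-\beta$ is convex and finite on an open set containing $\Lambda$ within the affine slice, or alternatively argue directly that a concave function on a convex set, finite on a polytope-like compact convex set, is bounded below there by reducing to extreme points if $\Lambda$ is a simplex, or in general by noting that lower semicontinuity of $w\mapsto w^Tf(x)$ for each fixed $x$ makes $\beta$ upper semicontinuous as an infimum, hence it attains its infimum on the compact set $\Lambda$ and that infimum is finite since each $\beta(w)$ is a finite real. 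With $m := \min_{w\in\Lambda}\beta(w) > -\infty$ and $M := \max_{w\in\Lambda} w^T\bar y < \infty$ in hand, taking $t := M - m \geq 0$ gives $w^T(\bar y - tc) = w^T\bar y - t \leq M - t = m \leq \beta(w)$ for every $w\in\Lambda$, so $\bar y - tc \in \mathcal{P}^0$ and the lemma follows. I would flag that the only subtle point is justifying upper semicontinuity (equivalently, the attainment of the minimum of $\beta$) on $\Lambda$; everything else is routine.
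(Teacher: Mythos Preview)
Your overall strategy is the same as the paper's: you observe that $\mathcal{P}^0 \neq \emptyset$ is equivalent to the existence of some $t$ with $\bar y - tc \in \mathcal{P}^0$, which in turn reduces to showing that $\beta(w) := \inf_{x\in\mathcal{X}} w^Tf(x)$ is bounded below on the compact base $\Lambda$. The paper's contradiction argument with $y = -nc$ is exactly this reduction in contrapositive form. So far so good.

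The gap is in the key step, where you claim that $\beta$ attains its infimum on $\Lambda$ because it is upper semicontinuous. This is backwards: an infimum of continuous functions is indeed upper semicontinuous, but an upper semicontinuous function on a compact set attains its \emph{supremum}, not its infimum. Upper semicontinuity gives you no lower bound whatsoever. Your alternative routes do not work either: (i) you cannot assume $-\beta$ is finite on an open neighbourhood of $\Lambda$ in the affine slice, since the hypothesis only gives $(\recc\mathcal{P})^+ \subseteq W$, and $\Lambda$ may sit on the relative boundary of $W$; (ii) $\Lambda$ need not be a polytope; (iii) a concave function that is merely finite on a compact convex set can still be unbounded below there (take the closed unit disk with $\beta \equiv 0$ on the interior and an arbitrary nonpositive unbounded function on the boundary circle; this is concave and finite but not bounded below).

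What the paper does at this exact point is more delicate: assuming $\beta(w_{n_k}) \to -\infty$ along a sequence $w_{n_k}\to w$ in $\Lambda$, it invokes Theorem~10.8 of Rockafellar to upgrade the pointwise convergence $w_{n_k}^Tf \to w^Tf$ (these are finite convex functions on $\R^n$) to uniform convergence on each compact $\mathcal{X}_m := \mathcal{X}\cap B(0,m)$, which allows interchanging the limit and the infimum on $\mathcal{X}_m$, and then lets $m\to\infty$ to conclude $\beta(w)=-\infty$, contradicting $w\in\Lambda\subseteq W$. This uniform-convergence step is precisely the missing ingredient in your argument; it is what supplies the needed lower semicontinuity of $\beta$ along sequences in $\Lambda$.
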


\begin{proof}
$\{0\} \neq (\recc\mathcal{P})^+$ implies that $\Lambda \neq \emptyset.$ As $\Lambda \subseteq (\recc\mathcal{P})^+ \subseteq W$, it is true that $\gamma^w:=\inf_{x\in \mathcal{X}}w^Tf(x) > -\infty$ for $w \in \Lambda$. Moreover, as $\recc\mathcal{P} \supseteq C$, it is true that $\Lambda \subseteq (\recc\mathcal{P})^+ \subseteq C^+$. Hence, $w^Tc >0$ for any $w \in \Lambda$.\\
For contradiction, assume $\mathcal{P}^0=\emptyset.$ Then, for all $y \in \R^q$, there exists $w \in \Lambda$ such that $w^Ty > \gamma^w$. In particular, consider $y=-nc \in \R^q$ for $n \geq 1.$ Then, there exists $w_n \in \Lambda$ with $-nw_n^Tc > \inf_{x\in \mathcal{X}}w_n^Tf(x) = \gamma^{w_n}$ for $n\geq 1$. Note that $\Lambda \subseteq \R^q$ is compact and by Bolzano Weierstrass Theorem, it is sequentially compact. That is, there exists a convergent subsequence $(w_{n_k})_{k\geq 1}$ of $(w_n)$ with $\lim_{k\rightarrow \infty}w_{n_k} = w \in \Lambda$. Then, 
\begin{align*}
\lim_{k\rightarrow \infty}\inf_{x\in\mathcal{X}}w_{n_k}^Tf(x) \leq \lim_{k\rightarrow\infty}(-n_kw_{n_k}^Tc) = -\infty
\end{align*}
as $w_{n_k}^Tc >0$ and $\lim_{k\rightarrow\infty} n_k = \infty$.

Since $f$ is $C$-convex and $w_{n_k} \in W$, $(w_{n_k}^Tf)_{k\geq 1}$ are finite convex functions on $\R^n$. Moreover, $w_{n_k}^Tf$ converges point-wise to $w^Tf$. By Theorem 10.8 of \cite{rockafellar}, $(w_{n_k}^Tf)_{k\geq 1}$ converges uniformly to $w^Tf$ on each compact subset of $\R^n$. Let $\mathcal{X}_m := \mathcal{X}\cap B(0,m)$. Clearly, $\mathcal{X}_m$ is compact for all $m\geq 1$ and $\bigcup_{m\geq 1}\mathcal{X}_m = \mathcal{X}$. Since $w_{n_k}^Tf$ uniformly converges to $w^Tf$ on $\mathcal{X}_m$, we have $$\inf_{x\in\mathcal{X}_m}w^Tf(x)=\inf_{x\in\mathcal{X}_m}\lim_{k\rightarrow\infty}w_{n_k}^Tf(x)= \lim_{k\rightarrow \infty}\inf_{x\in\mathcal{X}_m}w_{n_k}^Tf(x)=:b_m.$$ Moreover, as $b_m$ is a decreasing sequence in $\R$, $\lim_{m\rightarrow \infty}b_m$ exists and {$$\lim_{m\rightarrow\infty}b_m=\inf_{x\in\mathcal{X}}w^Tf(x)=\lim_{k\rightarrow \infty}\inf_{x\in\mathcal{X}}w_{n_k}^Tf(x) = -\infty,$$} which contradicts to the fact that $w \in \Lambda \subseteq W$.
\end{proof} 

\begin{theorem}\label{thm:2}
If $\{0\}\neq(\recc\mathcal{P})^+ = W$, then problem~\eqref{(P)} is self-bounded. 
\end{theorem}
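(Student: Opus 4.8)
The plan is to show that $\mathcal{P}$ is bounded with respect to $\recc\mathcal{P}$, i.e., to produce a point $y^0 \in \R^q$ with $\mathcal{P} \subseteq y^0 + \recc\mathcal{P}$. The natural candidate is any point of the set $\mathcal{P}^0$ from Lemma~\ref{lemma:Lambda}: since $(\recc\mathcal{P})^+ = W \neq \{0\}$, the hypothesis of that lemma is satisfied, so $\mathcal{P}^0 \neq \emptyset$; fix some $y^0 \in \mathcal{P}^0$. First I would observe that $\mathcal{P}$ itself admits the dual description $\mathcal{P} = \bigcap_{w \in (\recc\mathcal{P})^+\setminus\{0\}}\{y \st w^Ty \geq \inf_{x\in\mathcal{X}} w^Tf(x)\}$: indeed $\cl W = (\recc\mathcal{P})^+$ by Proposition~\ref{prop:BinPinf}, and a closed convex set equals the intersection of the halfspaces determined by the directions in which it is bounded below, which are exactly the elements of $b(\mathcal{P}) = -W$; the elements of $W$ with $w^Tc \neq 0$ are, after normalization, exactly $\Lambda$, and $w^Tc=1>0$ for all $w\in\Lambda$ since $\Lambda \subseteq C^+$. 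Hence in fact $\mathcal{P} = \mathcal{P}^0$ up to taking the closed convex hull, and more to the point $\mathcal{P} \subseteq \mathcal{P}^0$.

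Next I would show $\mathcal{P}^0 \subseteq y^0 + \recc\mathcal{P}$, which combined with the previous step gives $\mathcal{P} \subseteq y^0 + \recc\mathcal{P}$ and finishes the proof. Take $y \in \mathcal{P}^0$; I must check $y - y^0 \in \recc\mathcal{P}$. Since $\recc\mathcal{P}$ is a closed convex cone, it equals its bidual, so it suffices to verify $w^T(y - y^0) \geq 0$ for every $w \in (\recc\mathcal{P})^+$; by positive homogeneity and the fact that every nonzero $w \in (\recc\mathcal{P})^+ \subseteq C^+$ has $w^Tc > 0$, it suffices to check this for $w \in \Lambda$. But both $y$ and $y^0$ lie in $\mathcal{P}^0$, so $w^Ty \leq \gamma^w$ and $w^Ty^0 \leq \gamma^w$ for all $w \in \Lambda$ — this gives an upper bound on $w^T y$, not the lower bound I want. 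So I need to be more careful: the correct reading is that $\mathcal{P}^0$ as written with "$\leq$" is really the \emph{lower} image / polar-type object, and $\mathcal{P}$ is bounded with respect to $\recc\mathcal{P}$ precisely because $\mathcal{P} = \{y \st w^Ty \geq \gamma^w \ \forall w \in \Lambda\}$ while $-\mathcal{P}^0 + \{\text{something}\}$ controls it; concretely, for $y \in \mathcal{P}$ and the fixed $y^0 \in \mathcal{P}^0$ I have $w^T y \geq \gamma^w \geq w^T y^0$ for all $w\in\Lambda$, hence $w^T(y-y^0)\geq 0$ for all $w \in \Lambda$, hence for all $w\in(\recc\mathcal{P})^+$, hence $y - y^0 \in (\recc\mathcal{P})^{++} = \recc\mathcal{P}$. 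That is exactly $\mathcal{P} \subseteq y^0 + \recc\mathcal{P}$.

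Finally I would note $\mathcal{P} \neq \R^q$: this is immediate since $(\recc\mathcal{P})^+ = W \neq \{0\}$ forces $\recc\mathcal{P} \neq \R^q$, hence $\mathcal{P} \neq \R^q$. Together with the inclusion just established and Definition~\ref{defn:selfbddP}, this shows \eqref{(P)} is self-bounded. The main obstacle is the bookkeeping around the direction of the inequalities defining $\mathcal{P}$ versus $\mathcal{P}^0$ and making the reduction from "all $w \in (\recc\mathcal{P})^+$" to "all $w \in \Lambda$" rigorous — in particular justifying that the halfspace representation of the closed convex set $\mathcal{P}$ uses exactly the functionals in $\cl W = (\recc\mathcal{P})^+$, which is where $\Lambda$ being a compact basis and the identity $w^Tc>0$ on $\Lambda$ do the work; Lemma~\ref{lemma:Lambda} is what guarantees the whole scheme is non-vacuous by providing the anchor point $y^0$.
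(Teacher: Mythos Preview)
Your final argument is correct and is essentially the paper's proof: pick $y^0 \in \mathcal{P}^0$ (nonempty by Lemma~\ref{lemma:Lambda}, whose hypothesis $(\recc\mathcal{P})^+ \subseteq W$ is precisely where the assumption $W=(\recc\mathcal{P})^+$ is used), and then for any $y \in \mathcal{P}$ and $w \in \Lambda \subseteq C^+$ one has $w^Ty \geq \gamma^w \geq w^Ty^0$, so $y - y^0 \in ((\recc\mathcal{P})^+)^+ = \recc\mathcal{P}$. The paper carries out this last step by contradiction plus a separation argument rather than invoking the bidual directly, but the content is the same.

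What you should excise is the first-paragraph detour. The claim ``$\mathcal{P} \subseteq \mathcal{P}^0$'' is simply false --- the defining inequalities point in opposite directions, as you yourself realize in the second paragraph --- and the full halfspace representation $\mathcal{P} = \bigcap_{w}\{y : w^Ty \geq \gamma^w\}$ is never actually used. All you need is the one-line observation that $w^Ty \geq \inf_{x\in\mathcal{X}} w^Tf(x)$ for $y \in \mathcal{P}$ and $w \in C^+$, which is immediate from $\mathcal{P} = \cl(f(\mathcal{X})+C)$ and $w \in C^+$. Clean that up and the proof is complete.
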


\begin{proof}
If $(\recc \mathcal{P})^+\neq\{0\}$, then $\recc\mathcal{P}\neq \R^q$ and $\mathcal{P}\neq \R^q$. Moreover, as $\{0\}\neq (\recc\mathcal{P})^+ = W$, we have $\mathcal{P}^0 \neq \emptyset$ by Lemma~\ref{lemma:Lambda}. Let $\bar{y} \in \mathcal{P}^0$. Below, we show that $\{\bar{y}\} + \recc\mathcal{P} \supseteq \mathcal{P}$, hence,~\eqref{(P)} is self-bounded. Assume the contrary, that is, there exists $\bar{x}\in\mathcal{X}$ with $f(\bar{x})\notin \{\bar{y}\}+\recc\mathcal{P}$. Using the separation argument, there exists $\bar{w} \in \R^q\setminus\{0\}$ such that $\bar{w}^Tf(\bar{x}) < \bar{w}^T\bar{y} + \inf_{p\in\recc\mathcal{P}}\bar{w}^Tp$. Then, $\bar{w} \in (\recc\mathcal{P})^+$ and $\inf_{p\in\recc\mathcal{P}}\bar{w}^Tp \geq 0$. Let $\tilde{w}:=\frac{\bar{w}}{\bar{w}^Tc}$. Clearly, $\tilde{w}\in\Lambda$ and $\tilde{w}^T\bar{y} > \tilde{w}^Tf(\bar{x})\geq \inf_{x\in\mathcal{X}}\tilde{w}^Tf(x)$, which contradicts to the fact that $\bar{y}\in\mathcal{P}^0$. 
\end{proof}

\begin{remark}
	The relation between hyperbolic sets and their barrier cones is studied by Zaffaroni in~\cite{zaffaroni2013}. Indeed an easier proof of Theorem~\ref{thm:2} can be done using Theorem 6.5 in~\cite{zaffaroni2013}. The direct proof provided above uses the set $\mathcal{P}^0$, which in general is taken as the initial outer approximation to the upper image in Beson-type approximation algorithms, see~\cite{ehrgott_shao,cvop}. Hence, Lemma~\ref{lemma:Lambda} is also important for practical reasons.
\end{remark}

\section{Concluding Remarks} \label{sect:concl}
The aim of this study is to understand the structure of possibly unbounded convex vector optimization problems and to see if those are tractable in the sense that there are polyhedral inner and outer approximations of the upper image. It has been shown that not all CVOPs are tractable and indeed, only the ones that are self-bounded are tractable. 

For the problems which are not known to be bounded, no solution concept which generates inner and outer approximations to the upper image is known in the literature. Clearly, for problems that are not self-bounded, it is not possible to come up with such a concept as it is not possible to find a polyhedral outer approximation, see Proposition~\ref{prop:upperset3} and Definition~\ref{defn:selfbddP}. However, one could generalize the concept of (weak) $\epsilon$-solution given for bounded problems to the self-bounded problems, using the recession cone of the upper image as follows. 

\begin{definition}\label{defn:weaksolnselfbdd}
	For a self-bounded problem \eqref{(P)}, a nonempty finite set $\mathcal{\bar{X}}$ of (weak) minimizers is called a \emph{finite (weak) $\epsilon$-solution of \eqref{(P)}} if 
	\begin{equation*} \label{eqweaksolnselfbdd}
	\conv f(\mathcal{\bar{X}})+\recc \mathcal{P}-\epsilon \{c\} \supseteq \mathcal{P}.
	\end{equation*}
\end{definition}

This definition is valid in the sense that the existence is known, see Proposition~\ref{prop:upperset2} and Definition~\ref{defn:selfbddP}. However, since it is difficult to compute $\recc \mathcal{P}$ in general, it is not really practical. Note on the other hand that for two dimensional self-bounded CVOPs, $\recc \mathcal{P}$ is polyhedral and there are two extreme directions generating it as long as $\recc \mathcal{P}$ is not a halfspace. 

As discussed in Section~\ref{sect:intro}, Benson-type algorithms proposed in~\cite{ehrgott, cvop} are designed to solve bounded CVOPs. Note that a problem is bounded if and only if it is self-bounded and $\recc \mathcal{P}=C$. Then, by Proposition~\ref{prop:BinPinf} and Theorem~\ref{thm:2}, a problem is bounded if and only if $W = C^+$. As $W$ is known to be a convex cone, see Remark~\ref{rem:Wcone}, in order to check if a problem is bounded, it is enough to check if~\eqref{P1} is bounded for all extreme directions $w$ of $C^+$. 

Note that the algorithms in~\cite{ehrgott,cvop} can be extended to solve self-bounded problems as long as the recession cone of the upper image is polyhedral and its extreme directions can be computed, which remains as an open problem.

\section*{Acknowledgment}
We are grateful to the anonymous referees for insightful comments allowed us to correct some inaccuracies appearing in the preceding version and for numerous suggestions that improved the presentation. We would also like to thank Prof. Alberto Zaffaroni for his constructive discussion during the revision process.

\bibliographystyle{plain}
\bibliography{reportbib}

\end{document}